\setlist[itemize]{nosep,topsep=3pt,itemsep=3pt}
\setlist[enumerate]{nosep,topsep=3pt,itemsep=3pt}
\titlespacing{\section}{0pt}{1.5ex}{0ex}
\titlespacing{\subsection}{0pt}{1.5ex}{0ex}
\titlespacing{\subsubsection}{0pt}{1ex}{0ex}
\titlespacing{\paragraph}{0pt}{1.5ex}{1ex}
\newcommand{\arctanh}{\textnormal{arctanh}}
\newcommand{\handout}[5]{
   \renewcommand{\thepage}{#1-\arabic{page}}
   \noindent
   \begin{center}
   \framebox{
      \vbox{
    \hbox to 5.78in { {\bf #1}
     	 \hfill #2 }
       \vspace{4mm}
       \hbox to 5.78in { {\Large \hfill #5  \hfill} }
       \vspace{2mm}
       \hbox to 5.78in { {\it #3 \hfill #4} }
      }
   }
   \end{center}
   \vspace*{4mm}
}
\newenvironment{proof-sketch}{\noindent{\bf Sketch of Proof:}\hspace*{1em}}{\qed\bigskip}
\newenvironment{proof-idea}{\noindent{\bf Proof Idea}\hspace*{1em}}{\qed\bigskip}
\newenvironment{proof-of-lemma}[1]{\noindent{\bf Proof of Lemma #1}\hspace*{1em}}{\qed\bigskip}
\newenvironment{proof-attempt}{\noindent{\bf Proof Attempt}\hspace*{1em}}{\qed\bigskip}
\def\fnum@figure{{\bf Figure \thefigure}}
\def\fnum@table{{\bf Table \thetable}}
\long\def\@mycaption#1[#2]#3{\addcontentsline{\csname
  ext@#1\endcsname}{#1}{\protect\numberline{\csname
  the#1\endcsname}{\ignorespaces #2}}\par
  \begingroup
    \@parboxrestore
    \small
    \@makecaption{\csname fnum@#1\endcsname}{\ignorespaces #3}\par
  \endgroup}
\def\mycaption{\refstepcounter\@captype \@dblarg{\@mycaption\@captype}}
\newcommand{\mathify}[1]{\ifmmode{#1}\else\mbox{$#1$}\fi}
\newcommand{\bigO}O
\newcommand{\remove}[1]{}
\newcommand{\ignore}[1]{}
\renewcommand{\paragraph}{%
  \@startsection{paragraph}{4}%
  {\z@}{1.75ex \@plus 1ex \@minus .2ex}{-1em}%
  {\normalfont\normalsize\bfseries}%
}
\newcounter{t}
\declaretheoremstyle[bodyfont=\it,qed=\qedsymbol,headpunct=.\vphantom{$p_{p_{p_p}}$},postheadspace=\newline,headformat=\NAME\  \NUMBER\,\NOTE]{noproofstyle} 
\newtheoremstyle{break}%
{}{}%
{\itshape}{}%
{\bfseries}{.\vphantom{$p_{p_{p_p}}$}}%
{\newline}
{\thmname{#1}\thmnumber{ #2}\thmnote{\ \,\textmd{(#3)}}}
\theoremstyle{break}
\declaretheorem[name=Observation,numbered=no]{observation*}
\declaretheorem[numberlike=t]{fact}
\declaretheorem[numberlike=t]{theorem}
\declaretheorem[name=Theorem,numbered=no]{theorem*}
\declaretheorem[numberlike=t]{lemma}
\declaretheorem[name=Lemma,numbered=no]{lemma*}
\declaretheorem[numberlike=t]{corollary}
\declaretheorem[name=Corollary,numbered=no]{corollary*}
\declaretheorem[name=Parameter,numbered=no]{parameter*}
\declaretheorem[numberlike=t]{proposition}
\declaretheorem[name=Proposition,numbered=no]{proposition*}
\declaretheorem[name=Claim,numbered=no]{claim*}
\declaretheorem[name=Conjecture,numbered=no]{conjecture*}
\declaretheorem[name=Question,numbered=no]{question*}
\declaretheoremstyle[bodyfont=\it,headpunct=.\vphantom{$p_{p_{p_p}}$},postheadspace=\newline,headformat=\NAME\  \NUMBER\,\NOTE]{defstyle}
\declaretheorem[unnumbered,name=Example,style=defstyle]{example*}
\declaretheorem[unnumbered,name=Notation=defstyle]{notation*}
\declaretheorem[unnumbered,name=Construction,style=defstyle]{construction*}
\begin{document}
\title{A useful inequality of inverse hyperbolic tangent}
\author{Kunal Marwaha\thanks{University of Chicago. Email: \texttt{kmarw@uchicago.edu}}}
\date{}
\maketitle

\pagenumbering{arabic}
\vspace{-10mm}
\begin{abstract}
    We prove an inequality related to $\arctanh$, resolving a conjecture of Gu and Polyanskiy~\cite{gu2023weak}. 
\end{abstract}
\vspace{10mm}
In this note, we prove the following statement:
\begin{theorem}
\label{thm:main}
    Choose any $t_1, t_2, t_3 \in (-1, 1)$ and $\lambda \in [0,1]$. Then
    \begin{align}
        (t_1 + t_2 + t_3 + t_1 t_2 t_3) \arctanh \frac{\lambda(t_1 + t_2 + t_3 + t_1 t_2 t_3)}{1 + \lambda(t_1 t_2 + t_2 t_3 + t_3 t_1)} \le \lambda (t_1 + t_2 + t_3 + t_1 t_2 t_3) \sum_{i \in [3]} \arctanh(t_i)\,.
    \end{align}
\end{theorem}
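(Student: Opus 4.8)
The plan is to strip the common factor $A:=t_1+t_2+t_3+t_1t_2t_3$ off both sides, rewrite the statement as a product inequality via $\arctanh z=\tfrac12\ln\frac{1+z}{1-z}$, reduce the general case to $t_1=t_2=t_3$ using AM--GM, and close with a power-series comparison.

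\textbf{Reductions.} Put $S_2=t_1t_2+t_2t_3+t_3t_1$. The claimed inequality is invariant under $t_i\mapsto-t_i$: this flips the signs of $A$, of the inner $\arctanh$ argument, and of $\sum_i\arctanh t_i$, while fixing $S_2$, so both sides are unchanged. Hence I may assume $A\ge0$; if $A=0$ both sides vanish, and otherwise dividing by $A>0$ reduces the statement to $\arctanh\frac{\lambda A}{1+\lambda S_2}\le\lambda\sum_i\arctanh t_i$. Since $1+S_2\pm A=\prod_i(1\pm t_i)>0$, one has $1+\lambda S_2\pm\lambda A=(1-\lambda)+\lambda\prod_i(1\pm t_i)>0$, so all the arctanh's are well defined.

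\textbf{A clean hyperbolic reformulation.} With $p=\prod_i(1+t_i)>0$, $q=\prod_i(1-t_i)>0$ (and $p\ge q$ because $p-q=2A\ge0$), applying $\arctanh z=\tfrac12\ln\frac{1+z}{1-z}$ turns the target into $\frac{(1-\lambda)+\lambda p}{(1-\lambda)+\lambda q}\le(p/q)^{\lambda}$. Setting $r_i=\frac{1+t_i}{1-t_i}>0$, $R=\prod_i r_i=p/q\ge1$ and $D=\prod_i(1+r_i)$, one has $p=8R/D$, $q=8/D$; clearing denominators and using $R\ge1$ rewrites this as $(1-\lambda)D(R^{\lambda}-1)\ge8\lambda(R-R^{\lambda})$. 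Now AM--GM on the elementary symmetric functions of $r_1,r_2,r_3$ gives $D=1+\sum_i r_i+\sum_{i<j}r_ir_j+r_1r_2r_3\ge1+3R^{1/3}+3R^{2/3}+R=(1+R^{1/3})^{3}$, and since $(1-\lambda)(R^{\lambda}-1)\ge0$ it suffices to prove the inequality with $D$ replaced by $(1+R^{1/3})^{3}$ (the extremal case $r_1=r_2=r_3$, i.e.\ $t_1=t_2=t_3$). Substituting $R^{1/3}=e^{2y}$ with $y\ge0$ and using $1+e^{2y}=2e^{y}\cosh y$, $R^{\lambda}-1=2e^{3\lambda y}\sinh(3\lambda y)$, $R-R^{\lambda}=2e^{3(1+\lambda)y}\sinh(3(1-\lambda)y)$, the factor $16e^{3(1+\lambda)y}$ cancels and everything reduces to
\[(1-\lambda)\cosh^{3}y\,\sinh(3\lambda y)\ \ge\ \lambda\,\sinh(3(1-\lambda)y),\qquad y\ge0,\ \lambda\in[0,1],\]
which I will call $(\star)$.

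\textbf{Proving $(\star)$, and the crux.} Dividing by $3\lambda(1-\lambda)y$, $(\star)$ reads $\cosh^{3}y\cdot\frac{\sinh(3\lambda y)}{3\lambda y}\ge\frac{\sinh(3(1-\lambda)y)}{3(1-\lambda)y}$, which I verify coefficient by coefficient as even power series in $y$. Using $\cosh^{3}y=\tfrac14\cosh3y+\tfrac34\cosh y$ together with $\sum_{m\ge0}\binom{2k+1}{2m+1}x^{2m}=\frac{(1+x)^{2k+1}-(1-x)^{2k+1}}{2x}$ to collapse the Cauchy product, the $y^{2k}$-coefficient of the left side is
\[e_k(\lambda)=\frac{9^{k}\big((1+\lambda)^{2k+1}-(1-\lambda)^{2k+1}\big)+(1+3\lambda)^{2k+1}-(1-3\lambda)^{2k+1}}{8\lambda\,(2k+1)!},\]
while that of the right side is $g_k(\lambda)=9^{k}(1-\lambda)^{2k}/(2k+1)!$. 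Clearing $8\lambda(2k+1)!$, the goal $e_k\ge g_k$ becomes $9^{k}\big[(1+\lambda)^{2k+1}-(1-\lambda)^{2k+1}-8\lambda(1-\lambda)^{2k}\big]+\big[(1+3\lambda)^{2k+1}-(1-3\lambda)^{2k+1}\big]\ge0$. The second bracket is $\ge0$ always; for $k\ge2$ the first bracket is too, because $(1+\lambda)^{2k+1}-(1-\lambda)^{2k+1}=2\lambda\sum_{j=0}^{2k}(1+\lambda)^{j}(1-\lambda)^{2k-j}\ge2\lambda(2k+1)(1-\lambda)^{2k}\ge8\lambda(1-\lambda)^{2k}$; the cases $k=0$ (equality) and $k=1$ (the expression equals $144\lambda^{2}$) are checked by direct expansion. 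Summing these term-by-term dominations yields $(\star)$, and with it the theorem.

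The hard step is precisely $(\star)$. A tempting shortcut --- convexity of $\lambda\mapsto\arctanh\frac{\lambda A}{1+\lambda S_2}$ on $[0,1]$, which vanishes at $\lambda=0$ --- fails, since that map is genuinely concave near $\lambda=0$ whenever $S_2>0$; and the AM--GM bound on $D$ is tight exactly at $t_1=t_2=t_3$, so $(\star)$ is a faithful reformulation rather than a lossy relaxation. The one place that requires care is obtaining the coefficient inequality $e_k\ge g_k$ in the closed form above.
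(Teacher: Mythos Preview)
Your proof is correct and takes a genuinely different route from the paper. Both arguments begin identically: exploit the sign symmetry to assume $A>0$, divide by $A$, and rewrite the target as $\dfrac{(1-\lambda)+\lambda p}{(1-\lambda)+\lambda q}\le(p/q)^\lambda$ with $p=\prod_i(1+t_i)$, $q=\prod_i(1-t_i)$. From there the paper treats this as a one–variable problem in $\lambda$: it studies $h_x(\lambda)=\ln(1+\lambda(x-1))-\lambda\ln x$ and shows $h_c\le h_d$ on $[0,1]$ whenever $0<c-d<\ln(c/d)$ by locating the (at most one) root of $h_c'-h_d'$ via a quadratic analysis; a separate lemma, proved by sign–case calculus on the $t_i$, verifies that $p-q<\ln(p/q)$ in our setting. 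You instead change variables to $r_i=(1+t_i)/(1-t_i)$, observe that the inequality becomes $(1-\lambda)D(R^\lambda-1)\ge 8\lambda(R-R^\lambda)$ with $D=\prod_i(1+r_i)$ and $R=\prod_i r_i$, and use AM--GM on the elementary symmetric functions of $r_i$ to replace $D$ by its minimum $(1+R^{1/3})^3$, reducing to the diagonal $t_1=t_2=t_3$; the remaining single–parameter inequality $(\star)$ is then dispatched by a clean term–by–term power–series comparison (the closed form for $e_k(\lambda)$ via product--to--sum for $\cosh^3y\,\sinh(3\lambda y)$ checks out, as do the $k=0,1$ base cases and the $k\ge2$ estimate). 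Your AM--GM step is a neat way to absorb what the paper handles through its separate ``$c-d<\ln(c/d)$'' lemma and its case analysis; the paper's route, on the other hand, isolates a standalone inequality in $(c,d,\lambda)$ (its Lemma on $h_x$) that may be reusable beyond this context.
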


We first show how \Cref{thm:main} resolves the conjecture of~\cite{gu2023weak}. Define the functions $f$ and $g$ as
\begin{align*}
    f(t_1, t_2, t_3) &:= \arctanh(t_1) + \arctanh(t_2) + \arctanh(t_3)\,,
    \\
    g(t_1, t_2, t_3) &:= (t_1 + t_2 + t_3 + t_1 t_2 t_3) f(t_1, t_2, t_3)\,.
\end{align*}

\begin{proposition}
\label{prop:fg_decompose}
$\sum_{i \in [3]} t_i \cdot \arctanh(t_i) = 
        \frac{1}{4} \left(  g(t_1, t_2, t_3) 
        +  g(t_1, -t_2, t_3)
        +  g(t_1, t_2, -t_3)
        +  g(t_1, -t_2, -t_3)
        \right)\,.$
\end{proposition}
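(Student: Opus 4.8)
The plan is a direct expansion that exploits the oddness of $\arctanh$. Write $a_i := \arctanh(t_i)$, so $\arctanh(-t_i) = -a_i$. For signs $\epsilon_2, \epsilon_3 \in \{\pm 1\}$, the four terms on the right-hand side are precisely $g(t_1, \epsilon_2 t_2, \epsilon_3 t_3)$, and by oddness
\[
g(t_1, \epsilon_2 t_2, \epsilon_3 t_3) = \bigl(t_1 + \epsilon_2 t_2 + \epsilon_3 t_3 + \epsilon_2 \epsilon_3\, t_1 t_2 t_3\bigr)\bigl(a_1 + \epsilon_2 a_2 + \epsilon_3 a_3\bigr) =: L(\epsilon)\,A(\epsilon)\,.
\]
So the proposition is equivalent to the identity $\sum_{\epsilon_2, \epsilon_3 \in \{\pm 1\}} L(\epsilon) A(\epsilon) = 4\sum_{i \in [3]} t_i a_i$.

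Next I would multiply out $L(\epsilon)A(\epsilon)$ into its twelve monomials: the nine products $t_i a_j$ with $i,j \in [3]$, together with the three terms $t_1 t_2 t_3 \cdot a_j$. Using $\epsilon_2^2 = \epsilon_3^2 = 1$, each monomial carries a coefficient equal to $1$, $\epsilon_2$, $\epsilon_3$, or $\epsilon_2 \epsilon_3$; for instance $t_1 t_2 t_3 a_2$ comes with coefficient $\epsilon_2\epsilon_3 \cdot \epsilon_2 = \epsilon_3$, while $t_2 a_2$ comes with coefficient $\epsilon_2^2 = 1$. Summing over the four sign patterns and using $\sum_{\epsilon_2,\epsilon_3}\epsilon_2 = \sum_{\epsilon_2,\epsilon_3}\epsilon_3 = \sum_{\epsilon_2,\epsilon_3}\epsilon_2\epsilon_3 = 0$, every monomial with a nontrivial sign coefficient cancels, and the only survivors are the three "diagonal" terms $t_1 a_1$, $t_2 a_2$, $t_3 a_3$, each appearing with total coefficient $\sum_{\epsilon_2,\epsilon_3} 1 = 4$. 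Hence $\sum_{\epsilon} L(\epsilon)A(\epsilon) = 4(t_1 a_1 + t_2 a_2 + t_3 a_3)$, and dividing by $4$ gives the claim.

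There is no real obstacle; the only thing to be careful about is the bookkeeping of which monomials pick up a sign under $t_2 \mapsto \epsilon_2 t_2$, $t_3 \mapsto \epsilon_3 t_3$ — in particular that the cubic term $t_1 t_2 t_3$ transforms with the product $\epsilon_2\epsilon_3$, and that a cross term such as $(\epsilon_2 t_2)(a_3)$ keeps the sign $\epsilon_2$ whereas $(\epsilon_2 t_2)(\epsilon_2 a_2)$ becomes sign-free. A conceptually clean way to package this is to observe that $\tfrac14\sum_{\epsilon_2,\epsilon_3}(\cdot)$ is exactly the operator extracting the part of a function of $(t_2,t_3)$ that is even in each of $t_2$ and $t_3$; the identity then says simply that the only monomials in $L \cdot A$ that are even in both $t_2$ and $t_3$ are $t_1 a_1$, $t_2 a_2$, and $t_3 a_3$.
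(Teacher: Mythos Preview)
Your proof is correct and follows essentially the same parity argument as the paper: the paper's one-line proof observes that $x\cdot\arctanh(x)$ is even while the cross terms average to zero under sign flips, and your explicit expansion into twelve monomials with coefficients $1,\epsilon_2,\epsilon_3,\epsilon_2\epsilon_3$ is exactly the computation underlying that observation. Your final remark that $\tfrac14\sum_{\epsilon_2,\epsilon_3}$ extracts the part even in each of $t_2,t_3$ is precisely the paper's reasoning, just stated more fully.
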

\begin{proof}
Notice that $x \cdot \arctanh(x)$ is even, but $\mathbb{E}_{a \in \{\pm 1\}}[a \cdot \arctanh(b x )] = \mathbb{E}_{b \in \{\pm 1\}}[a \cdot \arctanh(b x )] = 0$.
\end{proof}
For any $\lambda \in \mathbb{R}$, define the functions $F_\lambda$ and $G_\lambda$ as
\begin{align*}
    F_\lambda(t_1, t_2, t_3) &:= \arctanh \frac{\lambda(t_1 + t_2 + t_3 + t_1 t_2 t_3)}{1 + \lambda(t_1 t_2 + t_2 t_3 + t_3 t_1)}\,, \\
    G_\lambda(t_1, t_2, t_3) &:= (t_1 + t_2 + t_3 + t_1 t_2 t_3) F_\lambda(t_1, t_2, t_3)\,.
\end{align*}

\begin{corollary}[{\cite[Conjecture 9]{gu2023weak}}]
    Choose any $\lambda, \theta_1, \theta_2, \theta_3 \in [0,1]$. Then the following inequality\footnote{\cite{gu2023weak} notes that the inequality is ``numerically verified''.} holds:
\begin{align}
\frac{1}{4} \left( 
G_\lambda(\theta_1, \theta_2, \theta_3)
+
G_\lambda(\theta_1, -\theta_2, \theta_3)
+
G_\lambda(\theta_1, \theta_2, -\theta_3)
+
G_\lambda(\theta_1, -\theta_2, -\theta_3)
\right)
\le \lambda \sum_{i \in [3]} \theta_i \cdot \arctanh(\theta_i)\,.
\end{align}
\end{corollary}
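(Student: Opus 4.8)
The plan is to read the corollary off from \Cref{thm:main} and \Cref{prop:fg_decompose} directly, introducing no new estimates; all of the analytic content sits in \Cref{thm:main}, which we take as given. The first step is to rewrite \Cref{thm:main} in the notation of the corollary. By the definitions of $F_\lambda$ and $f$, the left-hand side of the displayed inequality in \Cref{thm:main} equals $(t_1+t_2+t_3+t_1t_2t_3)F_\lambda(t_1,t_2,t_3)=G_\lambda(t_1,t_2,t_3)$, and the right-hand side equals $\lambda(t_1+t_2+t_3+t_1t_2t_3)f(t_1,t_2,t_3)=\lambda\,g(t_1,t_2,t_3)$. Hence \Cref{thm:main} is exactly the pointwise bound
\[
G_\lambda(t_1,t_2,t_3)\;\le\;\lambda\,g(t_1,t_2,t_3),\qquad t_1,t_2,t_3\in(-1,1),\ \lambda\in[0,1].
\]

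Next, I would fix $\lambda\in[0,1]$ and $\theta_1,\theta_2,\theta_3\in[0,1)$ and apply this bound to each of the four triples $(\theta_1,\theta_2,\theta_3)$, $(\theta_1,-\theta_2,\theta_3)$, $(\theta_1,\theta_2,-\theta_3)$, $(\theta_1,-\theta_2,-\theta_3)$, all of which lie in $(-1,1)^3$, so \Cref{thm:main} applies. Averaging the four inequalities gives
\[
\frac{1}{4}\sum_{\epsilon_2,\epsilon_3\in\{\pm 1\}}G_\lambda(\theta_1,\epsilon_2\theta_2,\epsilon_3\theta_3)\;\le\;\frac{\lambda}{4}\sum_{\epsilon_2,\epsilon_3\in\{\pm 1\}}g(\theta_1,\epsilon_2\theta_2,\epsilon_3\theta_3).
\]
The left-hand side is precisely the quantity in the corollary. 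For the right-hand side, \Cref{prop:fg_decompose} applied at $(t_1,t_2,t_3)=(\theta_1,\theta_2,\theta_3)$ states exactly that $\tfrac14\sum_{\epsilon_2,\epsilon_3}g(\theta_1,\epsilon_2\theta_2,\epsilon_3\theta_3)=\sum_{i\in[3]}\theta_i\arctanh(\theta_i)$, so the right-hand side equals $\lambda\sum_{i\in[3]}\theta_i\arctanh(\theta_i)$, which is the claimed inequality.

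Finally, the corollary's domain includes the endpoints $\theta_i=1$; these follow by letting $\theta_i\uparrow 1$ in the inequality just established on $[0,1)^3$, using continuity of both sides (and allowing the value $+\infty$), with a little extra care in the degenerate corner $\lambda=1$, where for a flipped triple such as $(1,-1,-1)$ the prefactor $t_1+t_2+t_3+t_1t_2t_3$ and the denominator $1+\lambda(t_1t_2+t_2t_3+t_3t_1)$ vanish simultaneously. I do not expect any real obstacle in this deduction: all the substance is in \Cref{thm:main}, and the only points needing a moment's attention are matching the four sign patterns in \Cref{prop:fg_decompose} to those in the corollary and checking that each flipped triple stays inside the domain $(-1,1)^3$ of \Cref{thm:main}.
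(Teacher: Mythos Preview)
Your proposal is correct and follows essentially the same route as the paper: apply \Cref{thm:main} to each of the four sign-flipped triples, average, and then invoke \Cref{prop:fg_decompose}. The only minor difference is the boundary case $\theta_i=1$, which the paper dispatches immediately by observing that the right-hand side is $+\infty$, rather than appealing to a limit.
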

\begin{proof}
    If any $\theta_i = 1$, the right-hand side is $+\infty$. Otherwise, by \Cref{thm:main}, $G_\lambda (\theta_1, \epsilon_2 \theta_2, \epsilon_3 \theta_3) \le \lambda g(\theta_1, \epsilon_2 \theta_2, \epsilon_3 \theta_3)$ for all $\epsilon_2, \epsilon_3 \in \{-1,1\}$; averaging both sides over $\epsilon_2, \epsilon_3$ and applying \Cref{prop:fg_decompose} implies the result.
\end{proof}

\section{Proof of the main theorem}
We first rewrite $f$ and $F_\lambda$ in terms of logarithms.
\begin{fact}  
    $\arctanh(x) := \frac{1}{2} \ln \frac{1 + x}{1-x}$. When $x = \frac{a}{b}$, then $\arctanh(x) = \frac{1}{2} \ln \frac{b+a}{b-a}$.
\end{fact}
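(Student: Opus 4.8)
The plan is to treat the first equation as the standard logarithmic form of $\arctanh$—taken here as its definition, but readily justified—and to obtain the second equation from it by a one-line substitution. Since both assertions are elementary, the whole argument reduces to inverting the hyperbolic tangent and then simplifying a quotient.

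First I would recall that $\arctanh$ is the inverse of $\tanh$ on $(-1,1)$ and write $\tanh(y) = \frac{e^y - e^{-y}}{e^y + e^{-y}}$. Setting $x = \tanh(y)$ and multiplying numerator and denominator by $e^y$ turns this into $x = \frac{e^{2y}-1}{e^{2y}+1}$. I would then solve the resulting linear equation in $e^{2y}$: cross-multiplying gives $x(e^{2y}+1) = e^{2y}-1$, hence $e^{2y}(1-x) = 1+x$ and $e^{2y} = \frac{1+x}{1-x}$. Because $x \in (-1,1)$ the right-hand side is a positive real, so taking logarithms and dividing by two yields $y = \frac{1}{2}\ln\frac{1+x}{1-x}$, which is the claimed formula.

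Next I would derive the second equation by substituting $x = \frac{a}{b}$ with $b \neq 0$. Writing $1 + \frac{a}{b} = \frac{b+a}{b}$ and $1 - \frac{a}{b} = \frac{b-a}{b}$, the common factor of $b$ cancels in the quotient, so $\frac{1+x}{1-x} = \frac{b+a}{b-a}$ regardless of the sign of $b$. Substituting this into the logarithmic form gives $\arctanh(a/b) = \frac{1}{2}\ln\frac{b+a}{b-a}$, as required.

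I do not expect any genuine obstacle, as the statement is essentially definitional. The only points requiring a moment's care are the domain condition $x \in (-1,1)$, which guarantees that the argument of the logarithm is a positive real so the expression is well defined, and the observation that in the rational substitution the factor $b$ cancels cleanly from numerator and denominator even when $b$ is negative.
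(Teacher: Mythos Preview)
Your proposal is correct. The paper itself offers no proof for this Fact---it simply records the logarithmic form of $\arctanh$ as its working definition and states the rational-substitution variant without further comment---so your brief derivation from $\tanh(y) = \frac{e^y-e^{-y}}{e^y+e^{-y}}$ and subsequent cancellation of $b$ is more than sufficient and entirely in keeping with how the paper uses the statement.
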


\begin{fact}
    $\prod_{i \in [3]} (1 + \theta_i) + \prod_{i \in [3]} (1 - \theta_i) = 2(1 + \theta_1 \theta_2 + \theta_2 \theta_3 + \theta_3 \theta_1)$.
\end{fact}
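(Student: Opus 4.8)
The plan is to expand both triple products in terms of the elementary symmetric polynomials in $\theta_1,\theta_2,\theta_3$ and observe that the odd-degree contributions cancel in the sum while the even-degree contributions double. Let $e_1 = \theta_1 + \theta_2 + \theta_3$, $e_2 = \theta_1\theta_2 + \theta_2\theta_3 + \theta_3\theta_1$, and $e_3 = \theta_1\theta_2\theta_3$. Multiplying out the three linear factors gives $\prod_{i\in[3]}(1+\theta_i) = 1 + e_1 + e_2 + e_3$, since every monomial appearing in the product is the squarefree product of a subset of the $\theta_i$, each with coefficient $+1$, and these group by cardinality into $1$, $e_1$, $e_2$, $e_3$.

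For the second product, the substitution $\theta_i \mapsto -\theta_i$ negates exactly the monomials of odd total degree, so $\prod_{i\in[3]}(1-\theta_i) = 1 - e_1 + e_2 - e_3$. Adding the two expansions, the terms $e_1$ and $e_3$ cancel and the terms $1$ and $e_2$ double, yielding $2(1 + e_2) = 2(1 + \theta_1\theta_2 + \theta_2\theta_3 + \theta_3\theta_1)$, which is precisely the claimed right-hand side. There is no genuine obstacle here: the identity is a routine polynomial expansion, and the only point worth stating explicitly is the sign bookkeeping — that $\theta_i \mapsto -\theta_i$ preserves even-degree monomials and flips odd-degree ones — which is exactly what forces the cancellation. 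I would therefore simply carry out the multiplication of the three factors and collect terms by degree.
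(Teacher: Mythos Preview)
Your proof is correct. The paper states this identity as a Fact without proof, since it is a routine polynomial expansion; your use of elementary symmetric polynomials and the sign-flip observation is exactly the natural way to verify it.
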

\begin{fact}
    $\prod_{i \in [3]} (1 + \theta_i) - \prod_{i \in [3]} (1 - \theta_i) = 2(\theta_1 + \theta_2 + \theta_3 + \theta_1 \theta_2 \theta_3)$.
\end{fact}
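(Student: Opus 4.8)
The plan is to expand both triple products directly and subtract, observing that only the odd-degree symmetric terms survive. Writing $e_1 = \theta_1 + \theta_2 + \theta_3$, $e_2 = \theta_1 \theta_2 + \theta_2 \theta_3 + \theta_3 \theta_1$, and $e_3 = \theta_1 \theta_2 \theta_3$ for the elementary symmetric polynomials in $\theta_1, \theta_2, \theta_3$, the standard expansion of a product of three linear factors gives
\begin{align*}
\prod_{i \in [3]} (1 + \theta_i) &= 1 + e_1 + e_2 + e_3\,, \\
\prod_{i \in [3]} (1 - \theta_i) &= 1 - e_1 + e_2 - e_3\,,
\end{align*}
where the second line follows from the first by sending each $\theta_i \mapsto -\theta_i$, which flips the sign of every monomial of odd total degree and fixes those of even degree.

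Subtracting the second identity from the first, the constant term and the degree-two term $e_2$ cancel, while the degree-one term $e_1$ and the degree-three term $e_3$ each double. This yields $2(e_1 + e_3) = 2(\theta_1 + \theta_2 + \theta_3 + \theta_1 \theta_2 \theta_3)$, which is exactly the claimed right-hand side. There is no genuine obstacle here; the only point demanding care is the sign pattern in the second expansion, and this identity is precisely the odd-degree mirror of the preceding Fact, in which the even-degree terms ($1$ and $e_2$) are the ones that survive the sum rather than cancel.
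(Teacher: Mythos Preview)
Your proof is correct: the direct expansion via elementary symmetric polynomials and the parity observation under $\theta_i \mapsto -\theta_i$ are exactly the right way to see this. The paper states this as a Fact without proof, so there is nothing to compare against; your argument is the natural verification.
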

\begin{corollary}
The following equalities hold:
\begin{align}
f(t_1, t_2, t_3) &= \frac{1}{2} \ln \frac{\prod_{i \in [3]} (1 + t_i)}{\prod_{i \in [3]} (1 - t_i)}\,,\\
   F_\lambda(t_1, t_2, t_3) &= \arctanh
\frac{\frac{\lambda}{2} \left( \prod_{i \in [3]} (1 + t_i) - \prod_{i \in [3]} (1 - t_i) \right)}
{(1-\lambda) + \frac{\lambda}{2} \left( \prod_{i \in [3]} (1 + t_i) + \prod_{i \in [3]} (1 - t_i) \right)} = \frac{1}{2} \ln \frac{(1-\lambda) + \lambda \prod_{i \in [3]} (1 + t_i) }{(1-\lambda) + \lambda \prod_{i \in [3]} (1 - t_i) }\,.
\end{align}
\end{corollary}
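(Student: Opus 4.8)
The plan is to verify each of the three asserted equalities by direct substitution of the three preceding Facts; no inequality or limiting argument is involved, so the whole proof is algebraic bookkeeping.

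First I would establish the expression for $f$. By the first Fact, $\arctanh(t_i) = \frac{1}{2}\ln\frac{1+t_i}{1-t_i}$, so summing over $i \in [3]$ and using additivity of the logarithm yields $f(t_1,t_2,t_3) = \frac{1}{2}\sum_{i\in[3]}\ln\frac{1+t_i}{1-t_i} = \frac{1}{2}\ln\prod_{i\in[3]}\frac{1+t_i}{1-t_i}$, which is exactly the claimed ratio of products.

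For $F_\lambda$ I would treat the two equalities in turn. The leftmost equality merely rewrites the argument of $\arctanh$ in the definition of $F_\lambda$: applying the third Fact shows that $\frac{\lambda}{2}\bigl(\prod_{i\in[3]}(1+t_i)-\prod_{i\in[3]}(1-t_i)\bigr) = \lambda(t_1+t_2+t_3+t_1t_2t_3)$, and applying the second Fact shows that $(1-\lambda)+\frac{\lambda}{2}\bigl(\prod_{i\in[3]}(1+t_i)+\prod_{i\in[3]}(1-t_i)\bigr) = (1-\lambda)+\lambda(1+t_1t_2+t_2t_3+t_3t_1) = 1+\lambda(t_1t_2+t_2t_3+t_3t_1)$, so the fraction coincides with the one in the definition of $F_\lambda$. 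For the final equality I would write the argument as $a/b$ with $a = \frac{\lambda}{2}\bigl(\prod_{i\in[3]}(1+t_i)-\prod_{i\in[3]}(1-t_i)\bigr)$ and $b = (1-\lambda)+\frac{\lambda}{2}\bigl(\prod_{i\in[3]}(1+t_i)+\prod_{i\in[3]}(1-t_i)\bigr)$, then invoke the $\arctanh\frac{a}{b} = \frac{1}{2}\ln\frac{b+a}{b-a}$ form of the first Fact. The one observation that makes this work is that $b+a$ cancels the $\prod_{i\in[3]}(1-t_i)$ contribution, leaving $(1-\lambda)+\lambda\prod_{i\in[3]}(1+t_i)$, while $b-a$ cancels the $\prod_{i\in[3]}(1+t_i)$ contribution, leaving $(1-\lambda)+\lambda\prod_{i\in[3]}(1-t_i)$; the ratio of these is precisely the claimed logarithm.

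There is no genuine obstacle here, since every step is a single algebraic simplification licensed by a Fact already in hand. The only place requiring care is the sign bookkeeping in forming $b\pm a$ — that is, tracking which of the two products survives in the numerator versus the denominator — and this is exactly the point at which the two symmetric-function Facts do all the work.
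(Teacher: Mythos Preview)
Your proposal is correct and is exactly the intended argument: the paper does not spell out a proof of this Corollary because it is meant to follow immediately from the three preceding Facts, and you have carried out precisely that substitution. The only step worth a sentence is the $b\pm a$ cancellation you flagged, and you handled it correctly.
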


The heart of the proof relies on the following lemma:
\begin{lemma}
    \label{lemma:proof_heart}
    Let $c, d \in \mathbb{R}_{>0}$ such that $0 < c - d < \ln \frac{c}{d}$. Then $\ln \frac{(1-\lambda) + \lambda c}{(1 - \lambda) + \lambda d} \le \lambda \ln \frac{c}{d}$ for all $\lambda \in [0,1]$.
\end{lemma}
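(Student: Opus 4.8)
The plan is to divide the claimed inequality by $\lambda$ and study the function
\[
    \psi(\lambda) \ := \ \frac{1}{\lambda}\,\ln\frac{(1-\lambda)+\lambda c}{(1-\lambda)+\lambda d}\,, \qquad \lambda\in(0,1]\,,
\]
extended to $\lambda=0$ by $\psi(0):=c-d$ (a first‑order Taylor expansion of $\ln(1+\lambda(c-1))-\ln(1+\lambda(d-1))$ shows this makes $\psi$ continuous on $[0,1]$). The inequality is trivial at $\lambda=0$, and for $\lambda\in(0,1]$ it reads exactly $\psi(\lambda)\le\ln\frac cd=\psi(1)$. So it suffices to prove $\psi(\lambda)\le\max\{\psi(0),\psi(1)\}$ for every $\lambda\in[0,1]$ and then observe that the hypothesis $c-d<\ln\frac cd$ is precisely the statement $\psi(0)<\psi(1)$, whence $\max\{\psi(0),\psi(1)\}=\psi(1)$.

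Write $u:=(1-\lambda)+\lambda c$ and $v:=(1-\lambda)+\lambda d$, which are positive on $[0,1]$ with $u>v$ for $\lambda>0$. A direct differentiation gives $\psi'(\lambda)=\lambda^{-2}R(\lambda)$, where $R(\lambda):=\frac1v-\frac1u-\ln\frac uv$, and a second differentiation (using $u-1=\lambda(c-1)$, $v-1=\lambda(d-1)$) yields $R(0)=0$ and $R'(\lambda)=\lambda\bigl[(b/v)^2-(a/u)^2\bigr]$ with $a:=c-1$, $b:=d-1$. The whole argument thus reduces to the claim: \emph{there is $\lambda_0\in[0,1]$ with $R\le 0$ on $[0,\lambda_0]$ and $R\ge 0$ on $[\lambda_0,1]$.} Granting this, $\psi'=R/\lambda^2$ is $\le 0$ on $(0,\lambda_0]$ and $\ge 0$ on $[\lambda_0,1]$, so $\psi$ is nonincreasing then nondecreasing on $[0,1]$, and therefore $\psi(\lambda)\le\max\{\psi(0),\psi(1)\}$, as wanted.

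To establish the claim, first note the hypothesis forces $d<1$: if $d\ge 1$ then $\ln\frac cd=\int_d^c\frac{dx}{x}\le c-d$, contradicting $c-d<\ln\frac cd$. Hence $b\in(-1,0)$, so $\lvert b/v\rvert=\lvert b\rvert/(1-\lambda\lvert b\rvert)$ is increasing in $\lambda$. Now split on the sign of $a=c-1$. If $c\le 1$, then $b<a\le 0$ and $\lvert a\rvert<\lvert b\rvert$, so $\lvert a/u\rvert=\lvert a\rvert/(1-\lambda\lvert a\rvert)<\lvert b\rvert/(1-\lambda\lvert b\rvert)=\lvert b/v\rvert$ for every $\lambda$; thus $R'\ge 0$, $R$ is nondecreasing, and $R(0)=0$ gives $R\ge 0$ on $[0,1]$ (take $\lambda_0=0$). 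If $c\ge 1$, then $\lvert a/u\rvert=a/(1+\lambda a)$ is nonincreasing in $\lambda$ while $\lvert b/v\rvert$ is increasing, so $\lvert b/v\rvert-\lvert a/u\rvert$ is strictly increasing in $\lambda$; hence $R'$ changes sign at most once and only from $-$ to $+$, so $R$ is nonincreasing then nondecreasing, and combined with $R(0)=0$ this forces $R\le 0$ on an initial segment and $R\ge 0$ thereafter — again the claimed pattern. This completes the proof.

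The main obstacle is the very first move. The obvious attempt — showing $\lambda\ln\frac cd-\ln\frac uv$ is concave in $\lambda$ (equivalently that $\ln\frac uv$ is convex) — simply fails, and fails even under the hypothesis whenever $c+d>2$; so no pure convexity/concavity argument can work. The resolution is the rescaling by $1/\lambda$: $\psi$ is quasiconvex (nonincreasing then nondecreasing), and the hypothesis $c-d<\ln\frac cd$ is exactly the statement that its value at the left endpoint does not exceed its value at the right endpoint, which pins the maximum at $\lambda=1$. The remaining work — the two derivative computations and the sign analysis of $R'$ — is routine, the only care needed being to keep track of which of $\lvert b/v\rvert$ and $\lvert a/u\rvert$ is increasing and which is decreasing in $\lambda$.
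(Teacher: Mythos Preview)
Your proof is correct and takes a genuinely different route from the paper's. The paper works directly with the difference $\Phi(\lambda):=\ln\frac{(1-\lambda)+\lambda c}{(1-\lambda)+\lambda d}-\lambda\ln\frac cd$, which vanishes at both endpoints; it shows $\Phi'$ has at most one zero in $[0,1]$ by clearing denominators to obtain an explicit quadratic in $\lambda$ and then doing a case analysis on the signs of $c-1,d-1$ to locate its roots and vertex, concluding from $\Phi'(0)=c-d-\ln\frac cd<0$. You instead divide through by $\lambda$ and show that the resulting $\psi$ is quasiconvex, by differentiating twice and observing that $R'(\lambda)/\lambda=(b/v)^2-(a/u)^2$ is monotone in $\lambda$ --- a cleaner structural fact than the quadratic root count, and one that avoids any explicit formula for the critical points. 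Your framing also makes the role of the hypothesis transparent: it is exactly the statement $\psi(0)<\psi(1)$, which pins the maximum of a quasiconvex function at the right endpoint. The price you pay is having to first deduce $d<1$ from the hypothesis, which the paper's quadratic analysis does not need. One minor phrasing quibble: your intermediate claim that ``$R\le 0$ on $[0,\lambda_0]$ and $R\ge 0$ on $[\lambda_0,1]$'' is not literally guaranteed in the subcase where $R$ stays negative all the way to $\lambda=1$; but then $\psi$ is nonincreasing on all of $[0,1]$, hence $\psi\le\psi(0)<\psi(1)$, so the conclusion is immediate and the argument goes through unchanged.
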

\begin{proof}
    Let $h_x(\lambda) = \ln (1 + \lambda(x-1)) - \lambda \ln x$. Suppose $0 < c - d < \ln \frac{c}{d}$; our goal is to prove $h_c(\lambda) \le h_d(\lambda)$ for all $\lambda \in [0,1]$. Note that $h_x(0) = h_x(1) = 0$ for all $x \in \mathbb{R}_{>0}$, and $h_1(\lambda) = 0$ for all $\lambda \in [0,1]$.
    
    When $x \ne 1$, $h'_x(\lambda) = \frac{x-1}{1 + \lambda (x-1)} - \ln x$. This has only one root at $\lambda^*_x = \frac{x-1-\ln x}{(x-1)\ln x}$. 
    Note that $h'_x(0) = x - 1 - \ln x > 0$ for all $x \ne 1$; thus, $h_x(\lambda)$ is increasing for $\lambda \in [0, \lambda_x^*)$ and decreasing for $\lambda \in (\lambda_x^*, 1]$.

    We first consider when one of $c,d$ equals $1$.
    Suppose $c = 1$; then $h_d(\lambda) \ge 0 = h_c(\lambda)$ for all $\lambda \in [0,1]$. When $d = 1$, the premise $c-1 < \ln c$ is never satisfied.

    Now consider any other $c,d \in \mathbb{R}_{>0}$ such that $0 < c - d < \ln \frac{c}{d}$. Here,
    \begin{align}
        h'_c(\lambda) - h'_d(\lambda) = 
        \frac{ \frac{1}{d-1} - \frac{1}{c-1} }
        {(\frac{1}{c-1} + \lambda) (\frac{1}{d-1} + \lambda)} - \ln \frac{c}{d} = \frac{c-d}{\left((c-1)\lambda + 1\right)\left((d-1)\lambda + 1\right)} - \ln \frac{c}{d}\,.
    \end{align}
    The roots of $h'_c(\lambda) - h'_d(\lambda)$ are the roots of the quadratic $(c-1)(d-1)\lambda^2 + (c+d-2)\lambda + 1 - \frac{c-d}{\ln c - \ln d} = 0$.
    Let the roots be $r_1, r_2$ such that $r_1 \le r_2$; then $r_1 r_2 = \frac{1}{(c-1)(d-1)}(1 - \frac{c-d}{\ln c - \ln d})$. The extremum of the quadratic is at $\lambda' = \frac{2-c-d}{2(c-1)(d-1)} = \frac{1}{2-2c} + \frac{1}{2-2d}$; note that $r_1 \le \lambda' \le r_2$. 

    We observe that at most one of $\{r_1, r_2\}$ is in $[0,1]$. When $c > 1$ and $d < 1$, $r_1 r_2 < 0$. Otherwise, $\lambda' \notin [0,1]$: when $c,d < 1$, $\lambda' > 1$, and when $c,d > 1$,  $\lambda' < 0$. So $h'_c(\lambda) - h'_d(\lambda)$ has at most one root for $\lambda \in [0,1]$.

    Note that $h'_c(0) - h'_d(0) = c - d - \ln \frac{c}{d} < 0$ by the premise.
    This implies $h_c(\lambda) - h_d(\lambda) \le 0$ for $\lambda \in [0,1]$, since it equals $0$ when $\lambda = 0$ and $\lambda = 1$, is decreasing at $\lambda = 0$, and has at most one local extremum for $\lambda \in [0,1]$.
\end{proof}
\begin{corollary}
\label{cor:useful}
    Let $\lambda \in [0,1]$ and $t_1, t_2, t_3 \in (-1,1)$ such that $0 < (t_1 + t_2 + t_3 + t_1t_2 t_3) <  f(t_1, t_2, t_3)$. Then
     \begin{align}
     F_\lambda(t_1, t_2, t_3) 
     = \frac{1}{2} \ln \frac{(1-\lambda) + \lambda \prod_{i \in [3]} (1 + t_i) }{(1-\lambda) + \lambda \prod_{i \in [3]} (1 - t_i) }
     \le
     \frac{\lambda }{2} \ln \frac{\prod_{i \in [3]} (1 + t_i)}{\prod_{i \in [3]} (1 - t_i)} 
     = \lambda f(t_1, t_2, t_3) \,.
    \end{align}
\end{corollary}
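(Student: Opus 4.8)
The plan is to obtain \Cref{cor:useful} as an immediate specialization of \Cref{lemma:proof_heart}. I would set $c := \prod_{i \in [3]}(1+t_i)$ and $d := \prod_{i \in [3]}(1-t_i)$; since each $t_i \in (-1,1)$, every factor $1 \pm t_i$ is strictly positive, so $c, d \in \mathbb{R}_{>0}$, as \Cref{lemma:proof_heart} requires. With this notation, the two displayed equalities in the statement are just the logarithmic forms of $F_\lambda$ and $f$ from the earlier corollary expressing $f$ and $F_\lambda$ in logarithmic form, namely $F_\lambda(t_1,t_2,t_3) = \frac12 \ln\frac{(1-\lambda)+\lambda c}{(1-\lambda)+\lambda d}$ and $f(t_1,t_2,t_3) = \frac12 \ln\frac{c}{d}$. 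Hence proving the middle inequality reduces to proving $\ln\frac{(1-\lambda)+\lambda c}{(1-\lambda)+\lambda d} \le \lambda \ln\frac{c}{d}$.

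Next I would verify that the hypothesis $0 < (t_1+t_2+t_3+t_1t_2t_3) < f(t_1,t_2,t_3)$ is exactly the hypothesis $0 < c - d < \ln\frac{c}{d}$ of \Cref{lemma:proof_heart}. By the stated Fact $\prod_{i\in[3]}(1+\theta_i) - \prod_{i\in[3]}(1-\theta_i) = 2(\theta_1+\theta_2+\theta_3+\theta_1\theta_2\theta_3)$, we have $c - d = 2(t_1+t_2+t_3+t_1t_2t_3)$, so $c > d$ is equivalent to positivity of $t_1+t_2+t_3+t_1t_2t_3$. And $\ln\frac{c}{d} = 2 f(t_1,t_2,t_3)$ by the logarithmic form of $f$, so $c - d < \ln\frac{c}{d}$ is equivalent to $t_1+t_2+t_3+t_1t_2t_3 < f(t_1,t_2,t_3)$. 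Both hold by assumption.

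Finally, applying \Cref{lemma:proof_heart} to these $c, d$ and the given $\lambda \in [0,1]$, then halving the resulting inequality and substituting back the expressions for $F_\lambda$ and $f$, gives the claim. I do not expect any real obstacle: all of the analytic content lives in \Cref{lemma:proof_heart}, and the only thing to watch is invoking the product identity and the logarithmic form of $f$ in the right direction so that the two hypotheses line up.
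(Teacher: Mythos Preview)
Your proposal is correct and follows essentially the same route as the paper: define $c=\prod_i(1+t_i)$, $d=\prod_i(1-t_i)$, translate the hypothesis into $0<c-d<\ln\frac{c}{d}$ via the product identity and the logarithmic form of $f$, and apply \Cref{lemma:proof_heart}. If anything, you are slightly more explicit than the paper in checking $c,d>0$ and in spelling out the halving step.
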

\begin{proof}
    Let $c := \prod_{i \in [3]} (1 + t_i)$ and  $d := \prod_{i \in [3]} (1 - t_i)$. Then $c - d = 2(t_1 + t_2 + t_3 + t_1t_2 t_3)$ and $\ln \frac{c}{d} = \sum_{i \in [3]} \ln \frac{1 + t_i}{1 - t_i} = 2f(t_1, t_2, t_3)$. The result follows from \Cref{lemma:proof_heart}.
\end{proof}

In our application of \Cref{lemma:proof_heart}, we show that $0 < c - d$ implies $c - d < \ln \frac{c}{d}$:

\begin{lemma}
\label{lemma:conditionistrivial}
Let $t_1, t_2, t_3 \in (-1,1)$ such that $0 < (t_1 + t_2 + t_3 + t_1t_2 t_3)$. Then $(t_1 + t_2 + t_3 + t_1t_2 t_3) <  f(t_1, t_2, t_3)$.
\end{lemma}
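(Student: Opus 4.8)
The plan is to reduce the three-variable inequality to a sum of one-variable inequalities. Write $p := t_1 + t_2 + t_3 + t_1 t_2 t_3$ and recall from the corollary that $f(t_1,t_2,t_3) = \tfrac12 \ln \tfrac{c}{d}$ where $c = \prod_i (1+t_i)$, $d = \prod_i (1-t_i)$, and $c - d = 2p$. So the claim $p < f$ is exactly $c - d < \ln\frac{c}{d}$, i.e. the hypothesis ``$c - d < \ln(c/d)$'' of \Cref{lemma:proof_heart} is automatic once $c > d$. Equivalently, setting $x := c/d > 0$, I want: $c - d > 0 \implies x - 1 > \ln x$ fails to be the obstruction — wait, that is the wrong direction. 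Let me restate: I must show $c - d < \ln(c/d)$. This is \emph{not} a universal truth (e.g.\ $c$ large, $d$ fixed makes $c - d$ grow linearly while $\ln(c/d)$ grows logarithmically); so the structure $c = \prod(1+t_i)$, $d = \prod(1-t_i)$ with $t_i \in (-1,1)$ must be used essentially.

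The key step is the elementary bound $\ln\frac{1+t}{1-t} = 2\arctanh(t) \ge 2t$ for $t \in [0,1)$, together with its reverse-flavored counterpart for $t < 0$. More precisely I will prove the sharper per-coordinate statement that lets the product telescope: for each $i$, $\ln\frac{1+t_i}{1-t_i} \ge$ (something that sums to at least $c - d$). The cleanest route: show $\ln\frac{c}{d} = \sum_i \ln\frac{1+t_i}{1-t_i} \ge \sum_i 2t_i \cdot (\text{correction})$ is awkward because $c - d = 2(t_1+t_2+t_3+t_1t_2t_3)$ has the cross term $t_1t_2t_3$. So instead I would compare directly: it suffices to show
\[
\sum_{i\in[3]} \arctanh(t_i) \;\ge\; t_1 + t_2 + t_3 + t_1t_2t_3
\]
whenever the right side is positive. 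Using $\arctanh(t) = t + \tfrac{t^3}{3} + \tfrac{t^5}{5} + \cdots \ge t + \tfrac{t^3}{3}$ for $t \in [0,1)$ and $\arctanh(t) \le t + \tfrac{t^3}{3(1-t^2)}$-type bounds for the mixed-sign case, I would argue that $\sum_i \arctanh(t_i) - \sum_i t_i \ge \sum_i \tfrac{t_i^3}{3}$ up to sign corrections, and then show $\sum_i \tfrac{t_i^3}{3} \ge t_1t_2t_3$ on the relevant region, or handle the sign of $t_1t_2t_3$ by cases (if $t_1t_2t_3 \le 0$ it's immediate from $\arctanh(t)\ge t$ pointwise; if $t_1t_2t_3 > 0$, either all $t_i>0$ or exactly one is positive, and in each subcase a convexity/monotonicity argument closes it).

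The main obstacle will be the mixed-sign regime where, say, $t_1, t_2 > 0$ and $t_3 < 0$ with $t_1 t_2 t_3 < 0$: here $\arctanh(t_3) < t_3 < 0$ (since $\arctanh$ lies below the identity for negative arguments), so the naive term-by-term bound $\arctanh(t_i) \ge t_i$ \emph{fails} at $i = 3$ and I must exploit the negativity of $t_1t_2t_3$ to compensate. Concretely I expect to need the estimate $\arctanh(t_1) + \arctanh(t_3) \ge t_1 + t_3$ when the \emph{sum} of a positive and a negative term is controlled — which follows because $\arctanh$ is convex on $[0,1)$ and odd — combined with the fact that on the region where $p > 0$ the positive coordinates dominate. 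I would organize the final write-up as: (i) the superdiagonal bound $\arctanh(t)\ge t$ for $t\in[0,1)$ and its reflection; (ii) reduce to showing $\sum \arctanh(t_i) \ge p$; (iii) split into the cases $t_1t_2t_3 \le 0$ (easy) and $t_1t_2t_3 > 0$ (use that either all three are positive — then cubes beat the product — or one positive two negative, handled via convexity and the constraint $p>0$). Only case analysis and one-variable calculus are needed; no new heavy machinery.
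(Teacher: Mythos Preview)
Your proposal contains a genuine gap, and it is precisely in the case you label ``easy.'' You claim that when $t_1t_2t_3 \le 0$ the inequality is ``immediate from $\arctanh(t)\ge t$ pointwise,'' which would make it enough to show $\sum_i \arctanh(t_i) \ge \sum_i t_i$. But this intermediate inequality is \emph{false} under the hypothesis $p>0$. Take $t_1=t_2=0.3$, $t_3=-0.5$: then $t_1t_2t_3=-0.045<0$ and $p=0.055>0$, yet
\[
\sum_i \arctanh(t_i)=2\arctanh(0.3)-\arctanh(0.5)\approx 0.0697 \;<\; 0.1=\sum_i t_i.
\]
(The lemma itself still holds here, since $0.0697>0.055$; it is only your reduction that fails.) You partly anticipate this---your ``main obstacle'' paragraph is exactly the two-positive/one-negative configuration, which has $t_1t_2t_3<0$---but then your final case split contradicts that discussion by declaring this same case easy. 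The convexity/pairing fix you sketch, $\arctanh(t_1)+\arctanh(t_3)\ge t_1+t_3$, requires $t_1+t_3\ge 0$ (equivalently $g(t_1)\ge g(|t_3|)$ for $g(t)=\arctanh(t)-t$), and $p>0$ does \emph{not} guarantee any such pairwise nonnegativity, as the same example shows ($t_1+t_3=t_2+t_3=-0.2$). So the mixed-sign regime genuinely needs the $t_1t_2t_3$ term on the right, not a termwise comparison.

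For contrast, the paper does not attempt a per-coordinate decomposition. It fixes $t_1,t_2$, sets $s(t_3)=p-f$, observes that $s$ vanishes at $t_3^\ast=-\tfrac{t_1+t_2}{1+t_1t_2}$ (where $p=0$ and $f=0$ simultaneously), and then controls $s$ on $(t_3^\ast,1)$ by locating the at most two critical points $\pm\sqrt{t_1t_2/(1+t_1t_2)}$ of $s$ relative to $t_3^\ast$, splitting on the sign pattern of $t_1,t_2$. Your cubes-versus-product idea (AM--GM on $\sum t_i^3/3\ge t_1t_2t_3$) does appear in the paper, but only in the subcase where all three variables are positive; the remaining cases are handled by this critical-point analysis rather than by pairing or convexity bounds. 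Your ``one positive, two negative'' case is also left at the level of ``handled via convexity and the constraint $p>0$,'' which is too vague to assess and faces a similar obstruction (after collapsing $t_2,t_3$ via the addition formula one must beat $(1+t_2t_3)(t_1+u)$ with $1+t_2t_3>1$, not merely $t_1+u$).
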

\begin{proof}
When $t > 0$, note that by Taylor expansion, $\arctanh(t) > t + \frac{t^3}{3} > t$. So when two variables are zero (let's say $t_1 = t_2 = 0$), $t_3 > 0$ implies $t_3 < \arctanh(t_3) = f(0, 0, t_3)$.

Suppose exactly one variable is zero (let's say $t_1 = 0$). Fix $t_2$. The equation $s(t_3) = t_2 + t_3 - f(0, t_2, t_3)$ has a root at $t_3 = -t_2$. Its derivative is $s'(t_3) = 1 - \frac{1}{1-t_3^2}$, which is negative for all $t_3 \ne 0$ (and zero at $t_3 = 0$). So $t_3 > -t_2$ (i.e. $t_2 + t_3 > 0$) implies $s(t_3) < s(-t_2) = 0$.

Suppose all variables are nonzero. Fix $t_1, t_2$. Consider $s(t_3) = t_1 + t_2 + t_3 + t_1t_2t_3 - f(t_1, t_2, t_3)$. There is a root at $t_3^* := -\frac{t_1 + t_2}{1 + t_1 t_2}$, since at $t_3 = t_3^*$, $\prod_{i \in [3]} (1 + t_i) = \prod_{i \in [3]} (1 - t_i)$. By the previous paragraph, $s(0) < 0$. The derivative is $s'(t_3) = 1 + t_1 t_2 - \frac{1}{1 - t_3^2}$, which has roots at $r_{\pm} = \pm \sqrt{\frac{t_1 t_2}{1 + t_1 t_2}}$. We split the analysis into cases:
\begin{itemize}
\item If $t_1 t_2 < 0$, then $s'(0) = t_1 t_2 < 0$. Since $r_{\pm}$ are not real, $s$ is strictly decreasing. So $t_3 > t_3^*$ (i.e. $t_1 + t_2 + t_3 + t_1 t_2 t_3 > 0$) implies $s(t_3) < 0$.

    \item If $t_1$ and $t_2$ are both negative, then $t_3^* > 0$. Note that $t_3^* > \max(r_+, r_-)$, since
    $(t_1 + t_2)^2 > 2 t_1 t_2 \ge t_1 t_2 (1 + t_1 t_2)$.
    Since $\lim_{t_3 \to 1} s(t_3) = - \infty$, $s(t_3)$ is decreasing when $t_3 > t_3^*$, and so $s(t_3) < 0$ in this region.
    \item If $t_1,t_2$ are both positive, then $t_3^* < 0$. We know $s(t_3) < 0$ for all $t_3^* < t_3 < 0$, since $s(0) < 0$, $s'(0) > 0$, and only one root of $s'$ is negative. When $t_3 > 0$, Taylor expansion gives $s(t_3) < t_1 t_2 t_3 - \frac{1}{3}(t_1^3 + t_2^3 + t_3^3)$. The right-hand side is equal to $-\frac{1}{3}(t_1 + t_2 +t_3)(t_1^2 + t_2^2 + t_3^2 - t_1t_2 - t_2t_3 - t_3t_1)$, which is at most zero.
\end{itemize}
\vspace{-3mm}
\end{proof}
\vspace{-4mm}
\begin{proof}[Proof of \Cref{thm:main}]
    Let $c := \prod_{i \in [3]} (1 + t_i)$ and $d := \prod_{i \in [3]} (1 - t_i)$. If $c = d$, then $G_\lambda(t_1, t_2, t_3)  = g(t_1, t_2, t_3)  = 0$ and the inequality is satisfied. Otherwise, suppose $c < d$. Note that both $G_\lambda$ and $g$ are invariant with respect to flipping the signs of all inputs, so we can consider $G_\lambda(-t_1, -t_2, -t_3)$ and $g(-t_1, -t_2, -t_3)$. But this swaps $c$ and $d$, so we can assume $c > d$.
   Then by \Cref{cor:useful} and \Cref{lemma:conditionistrivial}, $G_\lambda(t_1, t_2, t_3) = \frac{(c-d)}{2}F_\lambda(t_1, t_2, t_3) \le \lambda \frac{(c-d)}{2} f(t_1, t_2, t_3) = \lambda g(t_1, t_2, t_3)$.
\end{proof}
\vspace{-3mm}
\section*{Acknowledgements}
Thanks to the authors of~\cite{gu2023weak} for mentioning this problem in the \href{https://cmsa.fas.harvard.edu/event/gramsia2023/}{GRAMSIA workshop} at Harvard CMSA, and for encouraging this document. Thanks to Neng Huang for comments on a draft of this document.
\vspace{-3mm}
\bibliography{research.bib}

\begin{thebibliography}{GP23}

\bibitem[GP23]{gu2023weak}
Yuzhou Gu and Yury Polyanskiy.
\newblock Weak Recovery Threshold for the Hypergraph Stochastic Block Model,
  2023.
\newblock \href{https://arxiv.org/abs/2303.14689}{arXiv:2303.14689}.

\end{thebibliography}
\bibliographystyle{alpha-betta-url}

\end{document}